\newcommand{\R}{\numberset{R}}
\theoremstyle{plain}
\newtheorem{thm}{Theorem}[section]
\newtheorem{proposition}[thm]{Proposition}
\newtheorem{lemma}[thm]{Lemma}
\theoremstyle{definition}
\def\Xint#1{\mathchoice 
	{\XXint\displaystyle\textstyle{#1}}%
	{\XXint\textstyle\scriptstyle{#1}}%
	{\XXint\scriptstyle\scriptscriptstyle{#1}}%
	{\XXint\scriptscriptstyle\scriptscriptstyle{#1}}%
	\!\int} 
\def\XXint#1#2#3{{\setbox0=\hbox{$#1{#2#3}{\int}$} 
		\vcenter{\hbox{$#2#3$}}\kern-.5\wd0}} 
\def\Mint{\Xint -}
\def\R{\mathbb{R}}
\numberwithin{equation}{section} \makeatletter
\renewcommand{\p@enumi}{\thesection.}
\title{\textbf{Regularity for minimizers of non-autonomous non-quadratic functionals in the case $1<p<2$: \\an a priori estimate}}
\author{Andrea Gentile }
\begin{document}

\maketitle

\begin{abstract}
We establish an a priori estimate for the second derivatives  
of local minimizers of  integral functionals of
the form
$$
\mathcal{F}( v, \Omega )= \int_{\Omega} \! f(x,Dv(x)) \, dx,
$$
with convex integrand  with respect
to the gradient variable, assuming that the function that measures
the oscillation of the integrand with respect to the $x$ variable
belongs to a suitable Sobolev space. \noindent 
The novelty here is that we deal with integrands satisfying subquadratic growth conditions with respect to gradient variable.
\end{abstract}

\noindent {\footnotesize {\bf AMS Classifications.}   49N60; 
35J60; 49N99.}

\bigskip

\noindent {\footnotesize {\bf Key words and phrases.}  Local
minimizers; A priori estimate; Sobolev coefficients.}
\bigskip
\maketitle
\markright{\MakeUppercase{Regularity for minimizers}}

\bigskip
\section{Introduction}
\bigskip

	In this paper we consider integral functionals of the  form
	
	\begin{equation}\label{functional}
	\mathcal{F}(v, \Omega)=\int_{\Omega}f(x, Dv(x))dx,
	\end{equation}

	where $\Omega\subset\R^n$ is a bounded open set,  $f:\Omega\times\R^{N\times n}\to\R$ is a Carath\'{e}odory map, such that $\xi\mapsto f(x, \xi)$ is of class $C^2(\R^{N\times n})$, and for an exponent $p\in(1, 2)$ and some constants $L, \alpha, \beta>0$ the following conditions are satisfied:
	
	\begin{equation}\label{f1}
	\frac{1}{L}|\xi|^p\le f(x, \xi)\le L(1+
	|\xi|^p),
	\end{equation}
%
%
	\begin{equation}\label{f2}
	\left<D_\xi f(x, \xi)-D_\xi f(x, \eta), \xi-\eta\right>\ge\alpha\left(1+|\xi|^2+|\eta|^2\right)^\frac{p-2}{2}|\xi-\eta|^2,
	\end{equation}
	
	\begin{equation}\label{f3}
	|D_\xi f(x, \xi)-D_\xi f(x, \eta)|\le\beta\left(1+|\xi|^2+|\eta|^2\right)^\frac{p-2}{2}|\xi-\eta|.
	\end{equation}
	For what concerns the dependence of the energy density on the $x$-variable, we shall assume that the function $D_\xi f(x, \xi)$ is weakly differentiable with respect to $x$ and that $D_x(D_\xi f) \in L^q(\Omega\times\R^{N\times n})$, for some $q>n$.\\
	By the point-wise characterization of the Sobolev functions due to Hajlasz (\cite{H}) this is equivalent to assume that there exists a nonnegative  function $g\in L^q_\text{loc}(\Omega)$ such that
	
	\begin{equation}\label{f4}
	|D_\xi f(x, \xi)-D_\xi f(y, \xi) |\le \left(g(x)+g(y)\right)\left|x-y\right|\left(1+|\xi|^2\right)^\frac{p-1}{2}
	\end{equation} 
	
	for  all $\xi\in\R^{N\times n}$ and for almost every $x,y\in \Omega$.\\
	
	\noindent The regularity properties of minimizers of such integral
functionals   have been widely
investigated in case the energy density $f(x,\xi)$ depends on the
$x$-variable through a continuous function both in the superquadratic and in the subquadratic growth case. In fact, 
it is well known that the partial continuity of the vectorial
minimizers can be obtained with a quantitative modulus of continuity
that depends on the modulus of continuity of the coefficients (see
for example \cite{AF, fh, gm} and the monographs \cite{19,23}
for a more exhaustive treatment). For regularity results under general growth conditions, that of course include the superquadratic and  the subquadratic ones, we refer to \cite{diestrver09,diestrver11}.

Recently, there has been an increasing interest in the study of the
regularity under weaker assumptions on the function that measures
the oscillation of the integrand $f(x,\xi)$ with respect to the
$x$-variable. \\
This study has been successfully carried out when the oscillation
of $f(x,\xi)$ with respect to the $x$-variable is controlled through
a coefficient that belongs to a suitable Sobolev class of integer or fractional order and the assumptions \eqref{f1}--\eqref{f4} are satisfied with an exponent $p\ge 2$.

Actually, it has been shown that the weak
differentiability of the partial map $x\mapsto f(x,\xi)$ transfers to the
gradient of the minimizers of the functional \eqref{functional} (see
\cite{5,EleMarMas, EleMarMas2, GP, 32}) as well as to the gradient  of the
solutions of non linear elliptic systems (see \cite{6,CGP,CuMR,33}) and of non linear systems with degenerate ellipticity (see \cite{Gio}).

 As far as we know, no higher differentiability results are available for vectorial minimizers under the so-called subquadratic growth conditions, i.e. when the  assumptions \eqref{f1}--\eqref{f4} hold true for an exponent $1<p\le 2$ in case of Sobolev coefficients.
The aim of this paper is to start the study of the higher differentiability properties of local minimizers of integral functional \eqref{functional} under subquadratic growth condition. More precisely, we shall establish the following
a priori estimate for the second derivatives of the local minimizers.

\begin{thm}\label{main}
Let $u\in W^{2,p}_{\mathrm{loc}}(\Omega;\R^N)$ be a local minimizer of the functional 	$\mathcal{F}(v, \Omega)$ under the assumptions \eqref{f1}--\eqref{f4}. If $q\ge\frac{2n}{p}$, than the following estimate

\begin{equation}\label{mainestimate}
\Arrowvert D^2u\Arrowvert_{L^{p}(B_{r})}\le C(\alpha, \beta, p, n) \left(\Arrowvert Du\Arrowvert_{L^{p}(B_{R})}+\Arrowvert g\Arrowvert_{L^{q}(B_{R})}\right)
\end{equation}

holds true for every $0<r<R$ such that $B_R\Subset\Omega$ with $C=C(\alpha, \beta, p, n).$
\end{thm}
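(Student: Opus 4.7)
The plan is to follow the standard difference-quotient route, adapted to the subquadratic range. Since \eqref{mainestimate} is an a priori estimate, I may assume $u$ is regular enough to justify all the formal manipulations. Fix a cutoff $\eta\in C^\infty_c(B_R)$ with $\eta\equiv 1$ on $B_r$ and $|\nabla\eta|\le c/(R-r)$, and for a direction $e_s$ and small $|h|$ insert the test function $\varphi=\tau_{-s,h}(\eta^2\tau_{s,h} u)$ into the Euler--Lagrange system of the minimizer, where $\tau_{s,h}v(x)=v(x+he_s)-v(x)$. After discrete integration by parts, the resulting identity splits $\tau_{s,h}D_\xi f$ into an increment in the gradient slot (bounded below by \eqref{f2}) and an increment in the $x$ slot (controlled by \eqref{f4} through $h(g(x)+g(x+he_s))(1+|Du|^2)^{(p-1)/2}$). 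Using \eqref{f3} for the terms where $\nabla\eta$ falls on $\tau_{s,h}u$, applying Young's inequality to absorb $|\tau_{s,h}Du|$ into the degenerate weighted LHS, dividing by $h^2$ and letting $h\to 0$, I would obtain the weighted Caccioppoli
\begin{equation*}
\int_{B_r}(1+|Du|^2)^{\frac{p-2}{2}}|D^2u|^2\,dx\le \frac{c}{(R-r)^2}\int_{B_R}(1+|Du|^2)^{\frac{p}{2}}\,dx+c\int_{B_R} g^2(1+|Du|^2)^{\frac{p}{2}}\,dx.
\end{equation*}

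The crucial subquadratic step is bridging this degenerate second-order quantity to a genuine $L^p$ bound on $D^2u$. For this I would use the algebraic splitting
\begin{equation*}
|D^2 u|^p=\bigl[(1+|Du|^2)^{\frac{p-2}{2}}|D^2u|^2\bigr]^{\frac{p}{2}}\,(1+|Du|^2)^{\frac{p(2-p)}{4}},
\end{equation*}
together with H\"older's inequality with exponents $2/p$ and $2/(2-p)$, which gives
\begin{equation*}
\|D^2u\|_{L^p(B_r)}^p\le\Bigl(\int_{B_r}(1+|Du|^2)^{\frac{p-2}{2}}|D^2u|^2\Bigr)^{\!p/2}\Bigl(\int_{B_r}(1+|Du|^2)^{\frac{p}{2}}\Bigr)^{\!\frac{2-p}{2}}.
\end{equation*}

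The remaining term $\int g^2(1+|Du|^2)^{p/2}$ on the right of the Caccioppoli is handled by H\"older with exponents $q/2$ and $q/(q-2)$, producing a factor $\|g\|_{L^q(B_R)}^2$ and a factor of the form $(\int(1+|Du|^2)^{pq/(2(q-2))})^{(q-2)/q}$. The assumption $q\ge 2n/p$ is exactly equivalent to $\tfrac{pq}{q-2}\le p^\ast=\tfrac{np}{n-p}$, so the Sobolev embedding $W^{1,p}\hookrightarrow L^{p^\ast}$ applied to $Du$ bounds that last factor by a power of $\|Du\|_{L^p(B_R)}+\|D^2u\|_{L^p(B_R)}$. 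Plugging everything back and working on a nested family of intermediate radii $r<\rho<s<R$, I would obtain an inequality of the shape
\begin{equation*}
\|D^2u\|_{L^p(B_\rho)}\le \frac{c}{s-\rho}\bigl(\|Du\|_{L^p(B_R)}+\|g\|_{L^q(B_R)}+1\bigr)+\theta\,\|D^2u\|_{L^p(B_s)}
\end{equation*}
with $\theta<1$, which the standard Giaquinta--Giusti iteration lemma converts into \eqref{mainestimate}.

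The main obstacle is precisely the degeneracy of the natural second-order quantity in the subquadratic range: the weight $(1+|Du|^2)^{(p-2)/2}$ vanishes where $|Du|$ is large, so the $L^2$-type bound coming from \eqref{f2} does not, by itself, control $\|D^2u\|_{L^p}$. Bridging this gap by the H\"older trick forces a priori integrability of $(1+|Du|^2)^{pq/(2(q-2))}$, and it is exactly this requirement that pins down the sharp Sobolev threshold $q\ge 2n/p$; below this threshold the present strategy breaks down and a genuinely different idea would be needed.
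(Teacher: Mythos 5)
Your proposal is correct and follows essentially the same route as the paper: difference quotients with the test function $\tau_{s,-h}(\eta^2\tau_{s,h}u)$, the weighted Caccioppoli inequality driven by \eqref{f2}--\eqref{f4}, the H\"older splitting with exponents $2/p$ and $2/(2-p)$ to pass from the degenerate second-order quantity to $\|D^2u\|_{L^p}$, the Sobolev threshold $q\ge 2n/p$ via $\tfrac{pq}{q-2}\le p^*$, absolute continuity of $\int g^q$ to make the reabsorption factor strictly less than one, and the Giaquinta--Giusti iteration lemma to conclude. The only elided detail is that the cross term where $\nabla\eta$ hits $\tau_{s,-h}(\tau_{s,h}u)$ forces an extra Young inequality producing the additional $(R-r)^{-p/(p-1)}$ power in the Caccioppoli constant, but this does not affect the structure of the argument.
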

 
The main tool to establish previous result is the use of the so called difference quotient method and a double iteration to reabsorb terms with critical summability. Respect to previous papers on this subject,  new technical difficulties arise since we  are dealing with the subquadratic growth case.

	\section{Preliminary results}
	\markright{\MakeUppercase{Regularity for minimizers}}
	In this section we shall collect some results that will be useful to achieve our main result.\\
	In this section we recall some standard definitions and collect
several lemmas that we shall need to establish our  results. We
shall follow the usual convention and denote by $C$ or $c$ a general
constant that may vary on different occasions, even within the same
line of estimates. Relevant dependencies on parameters and special
constants will be suitably emphasized using parentheses or
subscripts. All the norms we use on $\R^n$, $\R^N$ and $\R^{n\times N}$ will be
the standard Euclidean ones and denoted by $| \cdot |$ in all cases.
In particular, for matrices $\xi$, $\eta \in \R^{n\times N}$ we write $\langle
\xi, \eta \rangle : = \text{trace} (\xi^T \eta)$ for the usual inner
product of $\xi$ and $\eta$, and $| \xi | : = \langle \xi, \xi
\rangle^{\frac{1}{2}}$ for the corresponding Euclidean norm. When $a
\in \R^N$ and $b \in \R^n$ we write $a \otimes b \in \R^{n\times N}$ for the
tensor product defined
as the matrix that has the element $a_{r}b_{s}$ in its r-th row and s-th column.\\
For  a  $C^2$ function $f \colon \Omega\times\R^{n\times N} \to \R$, we write
$$
D_\xi f(x,\xi )[\eta ] := \frac{\rm d}{{\rm d}t}\Big|_{t=0} f(x,\xi
+t\eta )\quad \mbox{ and } \quad D_{\xi\xi}f(x,\xi )[\eta ,\eta ] :=
\frac{\rm d^2}{{\rm d}t^{2}}\Big|_{t=0} f(x,\xi +t\eta )
$$
for $\xi$, $\eta \in \R^{n\times N}$ and for almost every $x\in \Omega$.\\
 With the symbol $B(x,r)=B_r(x)=\{y\in
\R^n:\,\, |y-x|<r\}$, we will denote the ball centered at $x$ of
radius $r$ and
$$(u)_{x_0,r}= \Mint_{B_r(x_0)}u(x)\,dx,$$
stands for the integral mean of $u$ over the ball $B_r(x_0)$. We
shall omit the dependence on the center  when it is clear from the context.
		
	\subsection{An auxiliary function}
	\markright{\MakeUppercase{Regularity for minimizers}}
	As usual, we shall use the following auxiliary function
	
	\begin{equation}
	V_p(\xi):=\left(1+|\xi|^2\right)^\frac{p-2}{4}\xi, \text{ for all } \xi\in \R^{N\times n}.
	\end{equation}

	
	\subsection{Some useful lemmas}
	
	The following result is proved in \cite{AF}, and will be useful to estimate the $L^p$ norm of $D^2u$, using the $L^2$ norm of the gradient of $V_p(Du).$
	
	\begin{lemma}\label{lemma2.2AF}
		For every $\gamma\in\left(-\frac{1}{2}, 0\right)$ and $\mu\ge0$ we have
		\begin{equation}
		(2\gamma+1)|\xi-\eta|\le\frac{|(\mu^2+|\xi|^2)^\gamma\xi-(\mu^2+|\eta|^2)^\gamma\eta}{(\mu^2+|\xi|^2+|\eta|^2)^\gamma}\le\frac{c(k)}{2\gamma+1}|\xi-\eta|,
		\end{equation}
		for every $\xi, \eta \in\R^k.$
	\end{lemma}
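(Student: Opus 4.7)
My plan is to view the statement as a pointwise inequality for the $C^1$ vector field
\[
F(\xi):=(\mu^2+|\xi|^2)^\gamma\,\xi,\qquad \xi\in\R^k,
\]
and to compare $F(\xi)$ and $F(\eta)$ by integrating along the straight segment $\zeta_t:=(1-t)\eta+t\xi$, $t\in[0,1]$. By the fundamental theorem of calculus $F(\xi)-F(\eta)=\int_0^1 DF(\zeta_t)(\xi-\eta)\,dt$, and a direct computation gives
\[
DF(\zeta)\,v=(\mu^2+|\zeta|^2)^\gamma v+2\gamma\,(\mu^2+|\zeta|^2)^{\gamma-1}\langle\zeta,v\rangle\,\zeta.
\]
Thus the symmetric matrix $DF(\zeta)$ has eigenvalue $(\mu^2+|\zeta|^2)^\gamma$ on $\zeta^\perp$ (with multiplicity $k-1$) and eigenvalue $(\mu^2+|\zeta|^2)^{\gamma-1}\bigl(\mu^2+(2\gamma+1)|\zeta|^2\bigr)$ along $\zeta$. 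For $\gamma\in(-\tfrac12,0)$ both are positive; the smaller is at least $(2\gamma+1)(\mu^2+|\zeta|^2)^\gamma$, and the operator norm coincides with $(\mu^2+|\zeta|^2)^\gamma$.

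For the left (lower) inequality, Cauchy-Schwarz combined with the ellipticity lower bound on $DF$ yields
\[
|F(\xi)-F(\eta)|\,|\xi-\eta|\ge\langle F(\xi)-F(\eta),\xi-\eta\rangle\ge(2\gamma+1)\,|\xi-\eta|^2\int_0^1(\mu^2+|\zeta_t|^2)^\gamma\,dt.
\]
Convexity of $|\cdot|^2$ gives $|\zeta_t|^2\le(1-t)|\eta|^2+t|\xi|^2\le|\xi|^2+|\eta|^2$, and since $\gamma<0$ the integrand is pointwise at least $(\mu^2+|\xi|^2+|\eta|^2)^\gamma$. Dividing by $|\xi-\eta|\,(\mu^2+|\xi|^2+|\eta|^2)^\gamma$ produces the left inequality with the sharp constant $(2\gamma+1)$.

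For the right (upper) inequality, the operator-norm estimate gives $|F(\xi)-F(\eta)|\le|\xi-\eta|\int_0^1(\mu^2+|\zeta_t|^2)^\gamma\,dt$, so the real task is to prove
\[
\int_0^1(\mu^2+|\zeta_t|^2)^\gamma\,dt\le \frac{c(k)}{2\gamma+1}\,(\mu^2+|\xi|^2+|\eta|^2)^\gamma.
\]
This is the main obstacle: because $\gamma<0$ the integrand becomes singular where the segment $[\eta,\xi]$ passes close to the origin, and that singularity is only borderline integrable as $\gamma\to-\tfrac12^+$. I plan to handle it by the geometric decomposition $|\zeta_t|^2=d^2+|\xi-\eta|^2(t-t_0)^2$, where $d$ is the distance from $0$ to the line through $\eta$ and $\xi$ and $t_0$ is the parameter of the foot of the perpendicular. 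The substitution $u=|\xi-\eta|(t-t_0)/\sqrt{\mu^2+d^2}$ reduces the problem to bounding $\int(1+u^2)^\gamma\,du$ over a bounded interval; the tail computation $\int_1^A u^{2\gamma}\,du=\tfrac{A^{2\gamma+1}-1}{2\gamma+1}$ supplies precisely the factor $(2\gamma+1)^{-1}$, and comparing $(\mu^2+d^2)$ with $(\mu^2+|\xi|^2+|\eta|^2)$ through the identity $|\xi|^2+|\eta|^2=2d^2+|\xi-\eta|^2\bigl(t_0^2+(1-t_0)^2\bigr)$, split according to whether $|\xi-\eta|^2$ is smaller or larger than $\mu^2+d^2$, closes the estimate; the dimensional constant $c(k)$ absorbs the geometric losses in this case split.
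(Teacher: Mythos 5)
The paper does not actually prove this lemma: it is quoted from Acerbi--Fusco \cite{AF}, so there is no in-paper argument to compare against. Judged on its own, your plan is the classical one and its core steps are correct: the representation $F(\xi)-F(\eta)=\int_0^1 DF(\zeta_t)(\xi-\eta)\,dt$, the eigenvalue computation (which, combined with Cauchy--Schwarz and the monotonicity of $s\mapsto(\mu^2+s)^\gamma$, gives the lower bound with the sharp constant $2\gamma+1$), and the reduction of the upper bound to $\int_0^1(\mu^2+|\zeta_t|^2)^\gamma\,dt\le\frac{c}{2\gamma+1}\,(\mu^2+|\xi|^2+|\eta|^2)^\gamma$. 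Note that this last inequality is exactly Lemma \ref{lemma2.3AF} (stated for $\mu=1$, which is the only case the paper uses; general $\mu>0$ follows by rescaling), so you could simply invoke it; your geometric substitution is in effect a proof of that lemma, with the bonus of making the $(2\gamma+1)^{-1}$ blow-up explicit and showing the constant is dimension-free. A small point worth one line: when $\mu=0$ and the segment meets the origin, $F$ is not $C^1$ there, but $|DF(\zeta_t)|$ is integrable in $t$ since $2\gamma>-1$ (or argue for $\mu>0$ and let $\mu\downarrow0$), so the representation formula survives.

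The one genuine imprecision is the announced case split. Distinguishing only whether $|\xi-\eta|^2$ is smaller or larger than $\mu^2+d^2$ does not close the upper bound: if, say, $\mu^2+d^2=1$, $|\xi-\eta|=1$ and $t_0$ is very large (a short segment lying far from the origin), the crude bound $(\mu^2+|\zeta_t|^2)^\gamma\le(\mu^2+d^2)^\gamma=1$ is far weaker than the required $\frac{c}{2\gamma+1}(\mu^2+|\xi|^2+|\eta|^2)^\gamma\sim\frac{c}{2\gamma+1}\,t_0^{2\gamma}$, which tends to $0$ as $t_0\to\infty$. The correct dichotomy is between $\mu^2+d^2$ and $|\xi-\eta|^2\max\bigl(t_0^2,(1-t_0)^2\bigr)$, i.e.\ whether the rescaled interval in the $u$-variable stays in $[-1,1]$ or extends to size $A\gg1$ (and, in the latter case, whether it contains $u=0$ or lies entirely at distance $\gtrsim A$ from it, where the integrand is already comparable to $(\mu^2+|\xi|^2+|\eta|^2)^\gamma$). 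With that routine adjustment, your identity $|\xi|^2+|\eta|^2=2d^2+|\xi-\eta|^2\bigl(t_0^2+(1-t_0)^2\bigr)$ and the tail computation do close the estimate with a universal constant, so the overall strategy is sound.
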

	
	\begin{lemma}\label{lemma2.3AF}
		For every $\gamma\in\left(-\frac{1}{2}, 0\right)$  we have
		\begin{equation}
		c_0(\gamma)(1+|\xi|^2+|\eta|^2)^\gamma\le\int_0^1(1+|t\xi+(1-t)\eta|^2)^\gamma\,dt\le c_1(\gamma)(1+|\xi|^2+|\eta|^2)^\gamma,
		\end{equation}
		for every $\xi, \eta \in\R^k.$
	\end{lemma}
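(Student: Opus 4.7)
The plan is to prove the two inequalities separately, exploiting that $s\mapsto(1+s)^{\gamma}$ is decreasing on $[0,\infty)$ because $\gamma<0$; each pointwise bound on $|t\xi+(1-t)\eta|^{2}$ therefore produces the opposite pointwise bound on the integrand. The restriction $\gamma>-\tfrac12$ will intervene only at the very end of the upper bound, where a one-dimensional tail integral of $(1+u^{2})^{\gamma}$ has to be kept finite.

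The lower bound is immediate from the convexity of $v\mapsto|v|^{2}$, which gives
\[
|t\xi+(1-t)\eta|^{2}\le t|\xi|^{2}+(1-t)|\eta|^{2}\le |\xi|^{2}+|\eta|^{2}
\]
for every $t\in[0,1]$. Using $\gamma<0$, this yields $(1+|t\xi+(1-t)\eta|^{2})^{\gamma}\ge(1+|\xi|^{2}+|\eta|^{2})^{\gamma}$ pointwise, and integrating in $t$ gives the lower bound with $c_{0}(\gamma)=1$.

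For the upper bound, which is the substantive part, I would observe that both sides are symmetric under the swap $(\xi,\eta)\mapsto(\eta,\xi)$ and thus assume $|\xi|\ge|\eta|$. The reverse triangle inequality $|a+b|\ge\bigl||a|-|b|\bigr|$ applied to $a=t\xi$, $b=(1-t)\eta$ then gives
\[
|t\xi+(1-t)\eta|\ge\bigl|t|\xi|-(1-t)|\eta|\bigr|=(|\xi|+|\eta|)\,|t-t_{0}|,\qquad t_{0}:=\frac{|\eta|}{|\xi|+|\eta|}\in\bigl[0,\tfrac{1}{2}\bigr].
\]
Setting $M^{2}:=|\xi|^{2}+|\eta|^{2}\le(|\xi|+|\eta|)^{2}$ and changing variables $u=M(t-t_{0})$, the integrand is dominated by $(1+M^{2}(t-t_{0})^{2})^{\gamma}$, so the integral in the statement is at most
\[
\frac{1}{M}\int_{-Mt_{0}}^{M(1-t_{0})}(1+u^{2})^{\gamma}\,du\le \frac{2}{M}\int_{0}^{M}(1+u^{2})^{\gamma}\,du,
\]
where the inclusion $[-Mt_{0},M(1-t_{0})]\subset[-M,M]$ and the evenness of the integrand were used.

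The concluding step, and the only genuinely delicate one, is to split the last integral into the ranges $[0,1]$ and $[1,M]$. On $[0,1]$ the integrand is bounded by $1$, contributing at most $1$; on $[1,M]$ the bound $(1+u^{2})^{\gamma}\le u^{2\gamma}$ integrates explicitly to $(M^{2\gamma+1}-1)/(2\gamma+1)$, and here the hypothesis $\gamma>-\tfrac12$ is precisely what keeps the denominator from vanishing. Combining the two pieces, one obtains for $M\ge 1$ a bound of order $1/M+M^{2\gamma}/(2\gamma+1)\lesssim M^{2\gamma}/(2\gamma+1)$, which in turn is controlled by a multiple of $(1+M^{2})^{\gamma}$ via $(1+M^{2})^{\gamma}\ge 2^{\gamma}M^{2\gamma}$; the regime $M\le 1$ is trivial because both sides lie between absolute constants. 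This yields the upper bound with a constant $c_{1}(\gamma)$ of order $1/(2\gamma+1)$, which correctly reflects the expected blow-up as $\gamma\to-\tfrac12^{+}$.
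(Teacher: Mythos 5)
Your proof is correct. The paper does not actually prove this lemma: it is stated among the preliminaries and attributed to Acerbi--Fusco \cite{AF}, so your argument is a self-contained substitute for that citation, and it follows the classical route used there: the lower bound from convexity and monotonicity of $s\mapsto(1+s)^{\gamma}$ (valid for every $\gamma<0$, with $c_{0}=1$), and the upper bound by parametrizing the segment, using the reverse triangle inequality to get $|t\xi+(1-t)\eta|\ge(|\xi|+|\eta|)\,|t-t_{0}|$, and reducing to the one-dimensional integral $\frac{2}{M}\int_{0}^{M}(1+u^{2})^{\gamma}\,du$ with $M^{2}=|\xi|^{2}+|\eta|^{2}$, where the splitting at $u=1$ makes the role of $\gamma>-\tfrac12$ explicit and yields $c_{1}(\gamma)\sim(2\gamma+1)^{-1}$. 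The edge cases ($|\xi|\ge|\eta|$ by symmetry, the regime $M\le1$ covering $\xi=\eta=0$) are handled, so nothing is missing.
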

	
	The next lemma can be proved using an iteration technique, and will be very useful in the following, where we will refer to this as \text{Iteration Lemma}.
	
	\begin{lemma}[Iteration Lemma]\label{iteration}
		Let $h: [\rho, R]\to \R$ be a nonnegative bounded function, $0<\theta<1$, $A, B\ge0$ and $\gamma>0$. Assume that 
		
		$$
		h(r)\le\theta h(d)+\frac{A}{(d-r)^\gamma}+B
		$$
		
		for all $\rho\le r<d\le R_0.$ Then
		
		$$
		h(\rho)\le \frac{c(A)}{(R_0-\rho)^\gamma}+cB,
		$$
		
		where $c=c(\theta, \gamma)>0$.
	\end{lemma}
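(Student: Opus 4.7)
The plan is to apply the given inequality along a carefully chosen geometric sequence of radii in $[\rho,R_0]$, and to exploit the strict smallness of $\theta$ to sum the resulting series. The only hypothesis that does not appear in the final estimate, namely the boundedness of $h$, will be used exactly once, to kill a tail term in the limit.

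First I would fix a parameter $\tau\in(0,1)$ with $\theta\tau^{-\gamma}<1$; the concrete choice $\tau=\bigl(\tfrac{1+\theta}{2}\bigr)^{1/\gamma}$ gives $\theta/\tau^\gamma=\tfrac{2\theta}{1+\theta}<1$ and depends only on $\theta,\gamma$. With this $\tau$ I define the increasing sequence
\[
r_0=\rho,\qquad r_{i+1}=R_0-(R_0-\rho)\,\tau^{i+1},
\]
so that $r_i\nearrow R_0$ and $r_{i+1}-r_i=(1-\tau)\tau^i(R_0-\rho)$. Applying the hypothesis with $r=r_i$, $d=r_{i+1}$ yields
\[
h(r_i)\le\theta\,h(r_{i+1})+\frac{A}{(1-\tau)^\gamma(R_0-\rho)^\gamma}\,\tau^{-i\gamma}+B.
\]

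Iterating this inequality $k$ times and using that $r_0=\rho$, I would obtain
\[
h(\rho)\le\theta^{k}h(r_k)+\frac{A}{(1-\tau)^\gamma(R_0-\rho)^\gamma}\sum_{i=0}^{k-1}\Bigl(\frac{\theta}{\tau^\gamma}\Bigr)^{\!i}+B\sum_{i=0}^{k-1}\theta^{i}.
\]
Since $h$ is bounded on $[\rho,R]$ and $\theta\in(0,1)$, the term $\theta^{k}h(r_k)$ vanishes as $k\to\infty$; by the choice of $\tau$, the geometric series $\sum_i(\theta/\tau^\gamma)^i$ converges to a finite constant $c_1=c_1(\theta,\gamma)$, and $\sum_i\theta^i=(1-\theta)^{-1}$. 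Passing to the limit $k\to\infty$ gives
\[
h(\rho)\le\frac{c_1\,A}{(1-\tau)^\gamma(R_0-\rho)^\gamma}+\frac{B}{1-\theta},
\]
which is exactly the claimed bound with $c=c(\theta,\gamma)$.

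There is no real obstacle here: the entire argument is the classical Giaquinta–Giusti “hole-filling / geometric iteration” trick, and the only delicate point is the calibration of $\tau$ so that $\theta\tau^{-\gamma}<1$, which is what allows both the tail in $h$ to vanish and the summed singular factor $(d-r)^{-\gamma}$ to produce the correct $(R_0-\rho)^{-\gamma}$ power in the conclusion.
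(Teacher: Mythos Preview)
Your argument is correct and is precisely the classical geometric-iteration proof; the paper does not give its own proof but simply cites \cite[Lemma~6.1]{23} (Giusti), where this same choice of radii and summation is carried out. So your approach coincides with the referenced proof.
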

	For the proof we refer to \cite[Lemma 6.1]{23}.

	\subsection{Finite difference and difference quotient}
	\markright{\MakeUppercase{Regularity for minimizers}}
	In what follows, we denote, for every function $f$,  $h\in \R$, and being $e_s$ the unit vector in the $x_s$ direction,

	$$
	\tau_{s, h}f(x):=f(x+he_s)-f(x)
	$$
	
	defines the finite difference operator.
	
	Here we  recall some  properties of the finite difference, that will be useful in the following.
	
	\begin{proposition}\label{diffquot}
		Let $f$ and $g$ be two functions such that $f, g \in W^{1,p}(\Omega, \R^n)$ with $p\ge1$, and let us consider the set
		
		$$
		\Omega_{|h|}:=\{x \in{\Omega}: \text{dist}(x, \partial\Omega)>|h|\}.
		$$
		
		Then the following properties hold:
		\begin{enumerate}
			\item\label{1} $\tau_{s, h}f\in W^{1,p}(\Omega_{|h|})$ and 
			$$
			D_i(\tau_{s, h}f)=\tau_{s, h}(D_if);
			$$
			\item\label{2} if at least one of the functions $f$ or $g$ has support contained in $\Omega_{|h|}$, then
			$$
			\int_{\Omega}f\tau_{s, h}gdx=\int_{\Omega}g\tau_{s, -h}fdx;
			$$
			\item\label{3} we have $$\tau_{s, h}(fg)(x)=f(x+he_s)\tau_{s, h}g(x)+g(x)\tau_{s, h}f(x).$$
		\end{enumerate}
	\end{proposition}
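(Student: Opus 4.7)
The three items are standard properties of the finite difference operator, and my plan is to establish them in order, since (1) provides the Sobolev regularity needed to justify the manipulations in (2).

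For item (1), the idea is to exploit the fact that translation is a bounded operator on $W^{1,p}$. Writing $f_h(x):=f(x+he_s)$, one verifies directly from the definition of weak derivative that $f_h\in W^{1,p}(\Omega_{|h|})$ with $D_i f_h(x)=(D_i f)(x+he_s)$: indeed, for every $\varphi\in C^\infty_c(\Omega_{|h|})$ the test $\varphi(\cdot-he_s)$ is admissible in $\Omega$ (because $\mathrm{supp}\,\varphi(\cdot-he_s)\subset\Omega$ by the definition of $\Omega_{|h|}$), and a change of variables yields
$$
\int_{\Omega_{|h|}} f_h\, D_i\varphi\, dx=\int_\Omega f(y)\, D_i\varphi(y-he_s)\, dy=-\int_\Omega D_i f(y)\,\varphi(y-he_s)\, dy=-\int_{\Omega_{|h|}} (D_i f)_h\,\varphi\, dx.
$$
Subtracting $f$ preserves Sobolev regularity and gives $D_i(\tau_{s,h}f)=(D_i f)_h-D_i f=\tau_{s,h}(D_i f)$.

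For item (2), I would use a bare change of variables. Assume, say, $g$ has support in $\Omega_{|h|}$ (the other case is symmetric, up to replacing $h$ by $-h$). Then $f(\cdot)g(\cdot+he_s)$ and $f(\cdot-he_s)g(\cdot)$ are supported in $\Omega$, so by translating the variable $x\mapsto y-he_s$,
$$
\int_\Omega f(x)\,g(x+he_s)\, dx=\int_\Omega f(y-he_s)\,g(y)\, dy.
$$
Subtracting $\int_\Omega f(x)g(x)\,dx$ from both sides rewrites the left side as $\int_\Omega f\,\tau_{s,h}g\,dx$ and the right side as $\int_\Omega g\,\tau_{s,-h}f\,dy$, giving the claim. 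The support hypothesis is exactly what guarantees that no boundary terms appear when the change of variables shifts mass off $\Omega$.

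For item (3), the computation is purely algebraic: adding and subtracting $f(x+he_s)g(x)$ gives
$$
\tau_{s,h}(fg)(x)=f(x+he_s)g(x+he_s)-f(x)g(x)=f(x+he_s)[g(x+he_s)-g(x)]+[f(x+he_s)-f(x)]g(x),
$$
which is the stated discrete Leibniz rule.

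There is no real obstacle; the only point requiring a little care is justifying the change of variables in (1) and (2), which is where the set $\Omega_{|h|}$ and the support assumption on $f$ or $g$ enter, ensuring that translated test functions remain compactly supported inside $\Omega$ so that integration over $\Omega$ agrees with integration over $\R^n$.
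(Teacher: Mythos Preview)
Your proof is correct. The paper states this proposition without proof, treating it as a standard recollection of elementary properties of the finite difference operator; your argument supplies exactly the routine verifications (translation invariance of weak derivatives, change of variables under the support hypothesis, and the algebraic add-and-subtract trick) that one would expect.
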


	The following lemmas describe fundamental properties of finite differences and difference quotients of Sobolev functions.
	
	\begin{lemma}\label{lem1}
		If $0<\rho<R$, $|h|<\frac{R-\rho}{2}$, $1<p<+\infty$, $s\in\{1,..., n\}$ and $f, D_sf \in L^p(B_R)$, then
		
		$$
		\int_{B_\rho}|\tau_{s,h}f(x)|^pdx\le|h|^p\int_{B_R}|D_sf(x)|^pdx.
		$$
		
		Moreover, for $\rho<R$, $|h|<\frac{R-\rho}{2}$,
		
		$$
		\int_{B_\rho}|f(x+he_s)|^pdx\le c(n, p)\int_{B_R}|f(x)|^pdx.
		$$
	\end{lemma}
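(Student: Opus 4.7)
The plan is to prove Lemma \ref{lem1} by the classical representation of the finite difference as an integral of the derivative in the direction $e_s$, combined with Jensen's (equivalently H\"older's) inequality in the parameter variable, and a translation argument. The only point that needs care is to ensure that the translated ball $B_\rho + te_s$ remains inside $B_R$ for all $|t|\le |h|$, which follows from the quantitative hypothesis $|h|<(R-\rho)/2$.

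For the first inequality, by the standard density of smooth functions in $W^{1,p}$, it suffices to argue for $f\in C^1(\overline{B_R})$. Writing the fundamental-theorem-of-calculus identity $\tau_{s,h}f(x) = \int_0^h D_s f(x+te_s)\,dt$ and applying H\"older to the $t$-integral yields
\begin{equation*}
|\tau_{s,h}f(x)|^{p}\le |h|^{p-1}\int_{0}^{|h|}|D_s f(x+te_s)|^{p}\,dt.
\end{equation*}
Integrating in $x$ over $B_\rho$, invoking Fubini to swap the orders of integration, and then performing the change of variables $y=x+te_s$ in the inner integral (so that $x$ ranges over a translate of $B_\rho$ which, by the smallness of $|h|$, is contained in $B_R$) delivers precisely the bound $|h|^{p}\int_{B_R}|D_s f|^{p}\,dx$. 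The general case $f\in W^{1,p}(B_R)$ then follows by approximation.

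The second inequality is immediate by a single change of variables $y=x+he_s$: it reduces to $\int_{B_\rho + he_s}|f(y)|^{p}\,dy \le \int_{B_R}|f(y)|^{p}\,dy$, which holds simply because $B_\rho + he_s\subset B_R$ under the standing assumption $|h|<(R-\rho)/2$; in particular, the constant $c(n,p)$ can in fact be chosen equal to $1$. I do not anticipate any serious obstacle: this is a routine computation that uses only the Sobolev characterization of $W^{1,p}$ functions as absolutely continuous on almost every line in the coordinate directions, together with invariance of Lebesgue measure under translations.
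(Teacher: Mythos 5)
Your proof is correct and is the standard argument for these difference-quotient estimates; the paper states Lemma \ref{lem1} without proof, as it is a textbook fact (cf.\ Giusti \cite{23}), so there is nothing to compare against. The fundamental-theorem-plus-H\"older-plus-translation scheme you outline is exactly the canonical route, the inclusion $B_\rho + te_s \subset B_R$ for $|t|\le|h|$ is the correct geometric input, and your observation that the second inequality in fact holds with constant $1$ is accurate. The only cosmetic point worth noting is that in the H\"older step one should keep track of the sign of $h$ (the integral representation runs from $0$ to $h$, which may be negative), but this affects only notation, not the estimate.
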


	\begin{lemma}
		Let $f:\R^n\to\R^N$, $f\in L^p(B_R)$ with $1<p<+\infty$. Suppose that there exist $\rho\in(0, R)$ and $M>0$ such that
		
		$$
		\sum_{s=1}^{n}\int_{B_\rho}|\tau_{s, h}f(x)|^pdx\le M^p|h|^p
		$$
			
		for every $h<\frac{R-\rho}{s}$. Then $f\in W^{1,p}(B_R, \R^N)$. Moreover
		
		$$
		\Arrowvert Df \Arrowvert_{L^p(B_\rho)}\le M.
		$$		
	\end{lemma}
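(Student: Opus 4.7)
The plan is to exploit that for $1<p<\infty$ the space $L^{p}(B_\rho)$ is reflexive, so a uniform bound on the difference quotients forces weak compactness, and then to identify the weak limit with the distributional partial derivative of $f$ via the discrete integration-by-parts formula. The conclusion $f\in W^{1,p}(B_R,\R^N)$ will then be read as weak differentiability on $B_\rho$ with the stated norm bound (the natural interpretation given the hypothesis is only on $B_\rho$).

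First I would fix an index $s\in\{1,\dots,n\}$ and a sequence $h_k\to 0$ with $|h_k|<(R-\rho)/2$. The assumption gives
\begin{equation*}
\Big\|\tfrac{\tau_{s,h_k}f}{h_k}\Big\|_{L^{p}(B_\rho)}^{p}\le M^{p}
\end{equation*}
so the sequence $\tau_{s,h_k}f/h_k$ is bounded in $L^{p}(B_\rho;\R^N)$. Since $1<p<\infty$, by reflexivity and the Banach--Alaoglu theorem we can extract a (non relabelled) subsequence and find $g_s\in L^{p}(B_\rho;\R^N)$ with
\begin{equation*}
\frac{\tau_{s,h_k}f}{h_k}\rightharpoonup g_s\quad\text{weakly in }L^{p}(B_\rho;\R^N).
\end{equation*}

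Next I would identify $g_s$ with $D_s f$. For any $\varphi\in C_c^{\infty}(B_\rho)$, once $|h_k|$ is small enough that $\mathrm{supp}\,\varphi\Subset\Omega_{|h_k|}$, property \eqref{2} of Proposition~\ref{diffquot} (applied componentwise) yields
\begin{equation*}
\int_{B_\rho}\frac{\tau_{s,h_k}f(x)}{h_k}\,\varphi(x)\,dx=\int_{B_\rho}f(x)\,\frac{\tau_{s,-h_k}\varphi(x)}{h_k}\,dx.
\end{equation*}
Since $\varphi$ is smooth, $\tau_{s,-h_k}\varphi/h_k\to -D_s\varphi$ uniformly on compact sets. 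The left-hand side passes to the limit by weak convergence against the $L^{p'}$ test function $\varphi$, while the right-hand side passes to the limit by dominated convergence (using $f\in L^{p}$ and uniform boundedness of $\tau_{s,-h_k}\varphi/h_k$). We obtain
\begin{equation*}
\int_{B_\rho} g_s\,\varphi\,dx=-\int_{B_\rho}f\,D_s\varphi\,dx,
\end{equation*}
which is precisely the definition of the weak partial derivative $D_s f=g_s$ on $B_\rho$. Since this holds for every $s\in\{1,\dots,n\}$, $f$ admits weak partial derivatives in $L^{p}(B_\rho)$.

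Finally, weak lower semicontinuity of the $L^{p}$-norm together with the hypothesis gives
\begin{equation*}
\sum_{s=1}^{n}\|D_s f\|_{L^{p}(B_\rho)}^{p}\le\liminf_{k\to\infty}\sum_{s=1}^{n}\Big\|\tfrac{\tau_{s,h_k}f}{h_k}\Big\|_{L^{p}(B_\rho)}^{p}\le M^{p},
\end{equation*}
which, after the standard comparison between the coordinatewise and Euclidean norms of $Df$, delivers $\|Df\|_{L^{p}(B_\rho)}\le M$. The only genuinely delicate step is the limit passage in the integration-by-parts identity; the rest is soft functional analysis. A minor technical point worth flagging is the uniqueness of the limit: since every weakly convergent subsequence must, by the same argument, coincide with the distributional derivative $D_s f$, the whole family $\tau_{s,h}f/h$ converges weakly to $D_s f$ as $h\to 0$, which is what justifies writing the bound without reference to the extracted subsequence.
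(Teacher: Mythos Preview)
Your argument is the standard reflexivity-plus-weak-limit-identification proof and is correct; the paper does not supply its own proof of this lemma, treating it as a classical preliminary fact (see e.g.\ \cite{23}). One small remark: your final ``standard comparison'' between $\sum_{s}\|D_s f\|_{L^p}^p$ and $\|Df\|_{L^p}^p$ gives the sharp constant $1$ only for $1<p\le 2$ (which is precisely the regime of this paper), while for $p>2$ a dimensional factor $n^{\frac{1}{2}-\frac{1}{p}}$ appears; if you want the bound exactly as stated for all $p$, pass to a common subsequence so that the full matrix $\big(\tau_{1,h_k}f/h_k,\dots,\tau_{n,h_k}f/h_k\big)$ converges weakly in $L^p(B_\rho;\R^{N\times n})$ and apply lower semicontinuity directly to the Frobenius norm, though this still requires the hypothesis to control that norm rather than the coordinatewise sum.
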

	
\section{Proof of Theorem \ref{main}}
\markright{\MakeUppercase{Regularity for minimizers}}

		It is well known that every local minimizer of the functional \eqref{functional} is a weak solution $u\in W^{1,p}(\Omega, \R^N)$ of the corresponding Euler-Lagrange system, i.e.
	
%
%
%
	
	\begin{equation}\label{diveq}
	\text{div}A(x, Du(x))=0,
	\end{equation}
	where we set 
	\begin{equation}\label{Adef}
	A_i^\alpha(x, \xi):=D_{\xi^\alpha_i}f(x, \xi), \text{ for all } \alpha=1,..., N\,\,\text{and}\,\, i=1,\dots,n.
	\end{equation}
	
Assumptions \eqref{f1}, \eqref{f2}, \eqref{f3}, can be written as
	
	
%
	
	\begin{equation}\label{A1}
	\left<A(x, \xi)-A(x, \eta), \xi-\eta\right>\ge\alpha|\xi-\eta|^2\left(1+|\xi|^2+|\eta|^2\right)^\frac{p-2}{2},
	\end{equation}

	\begin{equation}\label{A2}
	|A(x, \xi)-A(x, \eta)|\le\beta|\xi-\eta|\left(1+|\xi|^2+|\eta|^2\right)^\frac{p-2}{2}
	\end{equation}
for every $\xi$, $\eta \in \R^{n\times N}$ and for almost every $x \in \Omega$.
	
	Concerning the dependence on the $x$-variable, assumption \eqref{f4} translates into the following
	
	\begin{equation}\label{A3}
	|A(x, \xi)-A(y, \xi)|\le \left(g(x)+g(y)\right)\left|x-y\right|\left(1+|\xi|^2\right)^\frac{p-1}{2}
	\end{equation} 
	
	for every $\xi$, $\eta \in \R^{N\times n}$ and for almost every $x, y\in \Omega$.

%

\begin{proof}[Proof of Theorem \ref{main}]	Let us fix a ball $B_R(x_0)=B_R$ of radius $R\in(0, \text{dist}(x_0, \partial\Omega))$, and consider $\frac{R}{2}<r<\tilde{s}<t<\lambda r<R<1$, with $1<\lambda<2$.  Let's test the equation \eqref{diveq} with the function $\varphi=\tau_{s, -h}(\eta^2\tau_{s, h}u)$, where $\eta\in C^\infty_0(B_t)$ is a cut off function such that $\eta=1$ on $B_{\tilde{s}}$, $|D\eta|\le\frac{c}{t-\tilde{s}}.$\\
	With this choice of $\varphi$, and by  \ref{2} of Proposition \ref{diffquot}, we get
	
	$$
	\int_{B_R}\left<\tau_{s, h}A(x, Du(x)), D(\eta^2(x)(\tau_{s, h}u(x)))\right>dx=0.
	$$
	
	After some manipulations, and dropping the vector $e_s$ to simplify the notations, we can write the last equivalence as follows
	
	\begin{equation*}
	\begin{split}
		&I_0:=\int_{B_R}\left<A(x+h, Du(x+h))-A(x+h, Du(x)), \eta^2(x)D(\tau_{s, h}u(x))\right>dx\\=&-\int_{B_R}\left<A(x+h, Du(x))-A(x, Du(x)), \eta^2(x)D(\tau_{s,h}u(x))\right>dx\\&-\int_{B_R}\left<\tau_{s, h}A(x, Du(x)), 2\eta(x) D\eta(x)\otimes\tau_{s, h}u(x)\right>\\
		&=-\int_{B_R}\left<A(x+h, Du(x))-A(x, Du(x)), \eta^2(x)D(\tau_{s,h}u(x))\right>dx\\&-\int_{B_R}\left<A(x, Du(x)), \tau_{s, -h}\Big(2\eta(x) D\eta(x)\otimes\tau_{s, h}u(x)\Big)\right>\\
		& =-\int_{B_R}\left<A(x+h, Du(x))-A(x, Du(x)), \eta^2(x)D(\tau_{s,h}u(x))\right>dx
		\\&-\int_{B_R}\left<A(x, Du(x)), \tau_{s, -h}\Big(2\eta(x) D\eta(x)\Big)\otimes\tau_{s, h}u(x)\right>dx
		\\&-\int_{B_R}\left<A(x, Du(x)), 2\eta(x) D\eta(x)\otimes\tau_{s, -h}\Big(\tau_{s, h}u(x)\Big)\right>dx
		:I+II+III.
	\end{split}
	\end{equation*}
	Previous equality implies that
	\begin{equation}\label{main1}
	I_0\le |I|+|II|+|III|.
	\end{equation}
	
%

%
	
	In order to estimate the integral $|I|$, we use   the hypothesis \eqref{A3}  and  Young's inequality, as follows
	
	\begin{equation}\label{I}
	\begin{split}
	|I|&\le c|h|\int_{B_R}\eta^2(x)\left(g(x)+g(x+h)\right)\left(1+|Du(x)|^2\right)^{\frac{p-1}{2}}|D\tau_{s, h}u(x)|dx \\&\le c|h|\int_{B_R}\eta^2(x)\left(g(x)+g(x+h)\right)\left(1+|Du(x)|^2+|Du(x+h)|^2\right)^{\frac{p-1}{2}}|D(\tau_{s, h}u(x))|dx\\&
	\le\varepsilon\int_{B_R}\eta^2(x)|D(\tau_{s, h}u(x))|^2\left(1+|Du(x)|^2+|Du(x+h)|^2\right)^{\frac{p-2}{2}}dx\\&+
	c_\varepsilon|h|^2\int_{B_R}\eta^2(x)\left(g^2(x)+g^2(x+h)\right)\left(1+|Du(x)|^2+|Du(x+h)|^2\right)^\frac{p}{2}dx.
	\end{split}
	\end{equation}
Now, we  estimate $|II|$ by  \eqref{A2} and the properties of $\eta$ thus obtaining
	
	\begin{equation*}\label{II}
	\begin{split}
	|II|&\le \frac{c|h|}{(t-\tilde{s})^2}\int_{B_t}\left(1+|Du(x)|^2\right)^\frac{p-1}{2}|\tau_{s, h}u(x)|dx\\
	&\le \frac{c|h|}{(t-\tilde{s})^2}\left(\int_{B_t}\left(1+|Du(x)|^2\right)^\frac{p}{2}\right)^{\frac{p-1}{p}}\left(\int_{B_t}|\tau_{s, h}u(x)|^pdx\right)^{\frac{1}{p}} ,
	\end{split}
	\end{equation*}
	where, in the last inequality, we used H\"older's inequality. By virtue of Lemma \ref{lem1}, we obtain
	\begin{equation}\label{II}
	|II|\le \frac{c|h|^2}{(t-\tilde{s})^2}\int_{B_{\lambda r}}\left(1+|Du(x)|^2\right)^\frac{p}{2}.
	\end{equation}
	The term $|III|$ is estimated using the hypothesis \eqref{A2}, the properties of $\eta$, H\"older's inequality and Lemma \ref{lem1}, as follows
	
	\begin{equation}\label{III}
	\begin{split}
	|III|&\le \frac{c}{t-\tilde{s}}\int_{B_t}\left(1+|Du(x)|^2\right)^{\frac{p-1}{2}}|\tau_{s, -h}(\tau_{s, h}u(x))|dx\\
	&\le \frac{c}{t-\tilde{s}}\left(\int_{B_t}\left(1+|Du(x)|^2\right)^{\frac{p}{2}}\right)^{\frac{p-1}{p}}\left(\int_{B_t}|\tau_{s, -h}(\tau_{s, h}u(x))|^pdx\right)^{\frac{1}{p}}
	\\
	&\le \frac{c|h|}{t-\tilde{s}}\left(\int_{B_t}\left(1+|Du(x)|^2\right)^{\frac{p}{2}}\right)^{\frac{p-1}{p}}\left(\int_{B_{\lambda r}}|\tau_{s, h}Du(x))|^pdx\right)^{\frac{1}{p}},
	\end{split},
	\end{equation}
	where in the last inequality we used Lemma \ref{lem1} and \eqref{1} of Proposition \ref{diffquot}.
	By the assumption \eqref{A1}, we get
	
	\begin{equation}\label{I_0}
	|I_0|\ge c(p, \alpha)\int_{B_R}\eta^2(x)\left(1+|Du(x)|^2+|Du(x+h)|^2\right)^\frac{p-2}{2}|\tau_{s, h}Du(x)|^2dx.
	\end{equation}
	
	 Inserting estimates  \eqref{I}, \eqref{II}, \eqref{III} and \eqref{I_0} in \eqref{main1}, we obtain
	
	\begin{equation}
	\begin{split}
	&c(p, \alpha)\int_{B_R}\eta^2(x)\left(1+|Du(x)|^2+|Du(x+h)|^2\right)^\frac{p-2}{2}|\tau_{s, h}Du(x)|^2dx\\&\le\varepsilon\int_{B_R}\eta^2(x)|D(\tau_{s, h}u(x))|^2\left(1+|Du(x)|^2+|Du(x+h)|^2\right)^{\frac{p-2}{2}}dx\\&
	+c_\varepsilon|h|^2\int_{B_R}\eta^2(x)\left(g^2(x)+g^2(x+h)\right)\left(1+|Du(x)|^2+|Du(x+h)|^2\right)^\frac{p}{2}dx
	\\&+\frac{c|h|^2}{(t-\tilde{s})^2}\int_{B_{\lambda r}}\left(1+|Du(x)|^2\right)^\frac{p}{2}
	\\&+\frac{c|h|}{t-\tilde{s}}\left(\int_{B_t}\left(1+|Du(x)|^2\right)^{\frac{p}{2}}\right)^{\frac{p-1}{p}}\left(\int_{B_{\lambda r}}|\tau_{s, h}Du(x))|^pdx\right)^{\frac{1}{p}}.
	\end{split}
	\end{equation} 
	Choosing $\varepsilon=\frac{c(p, \alpha)}{2}$ in previous estimate, we can reabsorb the first integral in the right hand side by the left hand side thus getting
	
	\begin{equation}
	\begin{split}
	&\int_{B_R}\eta^2(x)\left(1+|Du(x)|^2+|Du(x+h)|^2\right)^\frac{p-2}{2}|\tau_{s, h}Du(x)|^2dx\\&\le
	c|h|^2\int_{B_R}\eta^2(x)\left(g^2(x)+g^2(x+h)\right)\left(1+|Du(x)|^2+|Du(x+h)|^2\right)^\frac{p}{2}dx
	\\&+\frac{c|h|^2}{(t-\tilde{s})^2}\int_{B_{\lambda r}}\left(1+|Du(x)|^2\right)^\frac{p}{2}
	\\&\frac{c|h|}{t-\tilde{s}}\left(\int_{B_t}\left(1+|Du(x)|^2\right)^{\frac{p}{2}}\right)^{\frac{p-1}{p}}\left(\int_{B_{\lambda r}}|\tau_{s, h}Du(x))|^pdx\right)^{\frac{1}{p}},
	\end{split}
	\end{equation} 
	with $c=c(\alpha,\beta,p,n)$.
	Dividing previous estimate  by $|h|^2$ and using Lemma \ref{lemma2.2AF}, we have
	
	\begin{equation}\label{24}
	\begin{split}&\int_{B_R}\eta^2(x)\frac{|\tau_{s, h}(V_p(Du))|^2}{|h|^2}\\&\le
	c\int_{B_R}\eta^2(x)\left(1+|Du(x)|^2+|Du(x+h)|^2\right)^\frac{p-2}{2}\frac{|\tau_{s, h}Du(x)|^2}{|h|^2}dx\\&
	\le
	c\int_{B_R}\eta^2(x)\left(g^2(x)+g^2(x+h)\right)\left(1+|Du(x)|^2+|Du(x+h)|^2\right)^\frac{p}{2}dx
	\\&+\frac{c}{(t-\tilde{s})^2}\int_{B_{\lambda r}}\left(1+|Du(x)|^2\right)^\frac{p}{2}
	\\&+\frac{c}{t-\tilde{s}}\left(\int_{B_t}\left(1+|Du(x)|^2\right)^{\frac{p}{2}}\right)^{\frac{p-1}{p}}\left(\int_{B_{\lambda r}}\frac{|\tau_{s, h}Du(x))|^p}{|h|^p}dx\right)^{\frac{1}{p}},
	\end{split}
	\end{equation} 
	 
	 Now, by H\"{o}lder's inequality and Lemma \ref{lemma2.2AF}, we get 
	 
	 \begin{equation}\label{25}
	 \begin{split}
	 &\int_{B_R}\eta^2(x)\frac{\left|\tau_{s, h}Du(x)\right|^p}{|h|^p}dx\\
	 &\le\int_{B_R}\eta^2(x)\frac{\left|\tau_{s, h}\left(V_p\left(Du\right)\right)\right|^p}{|h|^p}\left(1+\left|Du(x)\right|^2+\left|Du(x+h)\right|^2\right)^\frac{p(2-p)}{4}\\&\le\left(\int_{B_R}\eta^2(x)\frac{\left|\tau_{s, h}\left(V_p\left(Du\right)\right)\right|^2}{|h|^2}\right)^\frac{p}{2}\left(\int_{B_R}\eta^2(x)\left(1+\left|Du(x)\right|^2+\left|Du(x+h)\right|^2\right)^\frac{p}{2}\right)^\frac{2-p}{2}dx,
	 \end{split}
	 \end{equation} 
	 and therefore, combining \eqref{24} and \eqref{25}, we have
	 
	 \begin{equation}\label{26}
	 \begin{split}
	 &\int_{B_R}\eta^2(x)\frac{\left|\tau_{s, h}Du(x)\right|^p}{|h|^p}dx\\&\le c\Bigg\{
	\int_{B_R}\eta^2(x)\left(g^2(x)+g^2(x+h)\right)\left(1+|Du(x)|^2+|Du(x+h)|^2\right)^\frac{p}{2}dx
	\\&+\frac{c}{(t-\tilde{s})^2}\int_{B_{\lambda r}}\left(1+|Du(x)|^2\right)^\frac{p}{2}
	\\&+\frac{c}{t-\tilde{s}}\left(\int_{B_t}\left(1+|Du(x)|^2\right)^{\frac{p}{2}}\right)^{\frac{p-1}{p}}\left(\int_{B_{\lambda r}}\frac{|\tau_{s, h}Du(x))|^p}{|h|^p}dx\right)^{\frac{1}{p}}\Bigg\}^\frac{p}{2}\\&\cdot\left\{\int_{B_R}\eta^2(x)\left(1+\left|Du(x)\right|^2+\left|Du(x+h)\right|^2\right)^\frac{p}{2}\right\}^\frac{2-p}{2}dx.
	 \end{split}
	 \end{equation}
	 
%
 
Using Young's inequality with exponents $\frac{2}{p}$ and $\frac{2}{2-p}$ and the properties of $\eta$, we have

\begin{equation}\label{26}
	 \begin{split}
	 &\int_{B_R}\eta^2(x)\frac{\left|\tau_{s, h}Du(x)\right|^p}{|h|^p}dx\\&\le 
	c\int_{B_R}\eta^2(x)\left(g^2(x)+g^2(x+h)\right)\left(1+|Du(x)|^2+|Du(x+h)|^2\right)^\frac{p}{2}dx
	\\&+\left(1+\frac{c}{(t-\tilde{s})^2}\right)\int_{B_{\lambda r}}\left(1+|Du(x)|^2\right)^\frac{p}{2}
	\\&+\frac{c}{t-\tilde{s}}\left(\int_{B_t}\left(1+|Du(x)|^2\right)^{\frac{p}{2}}\right)^{\frac{p-1}{p}}\left(\int_{B_{\lambda r}}\frac{|\tau_{s, h}Du(x))|^p}{|h|^p}dx\right)^{\frac{1}{p}}.
	 \end{split}
	 \end{equation}

Using Young's inequality with exponents $p$ and $\frac{p}{p-1}$ to estimate the last integral in the left side, we obtain

\begin{equation}\label{29}
\begin{split}
&\int_{B_R}\eta^2(x)\frac{\left|\tau_{s, h}Du(x)\right|^p}{|h|^p}dx \le c\int_{B_{\lambda r}}g^2(x)dx+c\int_{B_{\lambda r}}g^2(x)\left|Du(x)\right|^pdx\\&+c\left(1+\frac{1}{(t-\tilde{s})^2}+\frac{1}{(t-\tilde{s})^{\frac{p}{p-1}}}\right)\int_{B_R}\left(1+\left|Du(x)\right|^2\right)^\frac{p}{2}dx\\&+\frac{1}{2}\int_{B_{\lambda r} }\frac{\left|\tau_{s, h}Du(x)\right|^p}{|h|^p}dx.
\end{split}
\end{equation}

Recalling the properties of $\eta$, we obtain

\begin{equation}\label{30}
\begin{split}
\int_{B_{\tilde{s}}}\frac{\left|\tau_{s, h}Du(x)\right|^p}{|h|^p}dx &\le \frac{1}{2}\int_{B_{\lambda r} }\frac{\left|\tau_{s, h}Du(x)\right|^p}{|h|^p}dx\\
&+c\int_{B_{\lambda r}}g^2(x)dx+c\int_{B_{\lambda r}}g^2(x)\left|Du(x)\right|^pdx\\&+c\left(1+\frac{1}{(t-\tilde{s})^2}+\frac{1}{(t-\tilde{s})^{\frac{p}{p-1}}}\right)\int_{B_R}\left(1+\left|Du(x)\right|^2\right)^\frac{p}{2}dx.
\end{split}
\end{equation}

Since the previous estimate holds for every $r<\tilde{s}<t<\lambda r$, the Lemma \ref{iteration} implies

\begin{equation}\label{31}
\begin{split}
\int_{B_{r}}\frac{\left|\tau_{s, h}Du(x)\right|^p}{|h|^p}dx&\le c\int_{B_{\lambda r}}g^2(x)dx+c\int_{B_{\lambda r}}g^2(x)\left|Du(x)\right|^pdx\\&+c\left(1+\frac{1}{r^2(\lambda-1)^2}+\frac{1}{r^\frac{p}{p-1}(\lambda-1)^\frac{p}{p-1}}\right)\int_{B_{\lambda r}}\left(1+\left|Du(x)\right|^2\right)^\frac{p}{2}dx
\end{split}
\end{equation}
and so, by Lemma \ref{lem1},
\begin{equation}\label{31}
\begin{split}
\int_{B_{r}}|D^2u|^pdx&\le c\int_{B_{\lambda r}}g^2(x)dx+c\int_{B_{\lambda r}}g^2(x)\left|Du(x)\right|^pdx\\&+c\left(1+\frac{1}{r^2(\lambda-1)^2}+\frac{1}{r^\frac{p}{p-1}(\lambda-1)^\frac{p}{p-1}}\right)\int_{B_{\lambda r}}\left(1+\left|Du(x)\right|^2\right)^\frac{p}{2}dx.
\end{split}
\end{equation}
To go further in the estimate, we have to study the term

\begin{equation}\label{32}
\int_{B_{\lambda r}}g^2(x)\left|Du(x)\right|^pdx,
\end{equation}
and to do this, our first step is to apply H\"{o}lder's inequality with exponents $\frac{q}{2}$ and $\frac{q}{q-2}$, thus obtaining\\

\begin{equation}\label{33}
\int_{B_{\lambda r}}g^2(x)\left|Du(x)\right|^pdx\le\left(\int_{B_{\lambda r}}g^q(x)dx\right)^\frac{2}{q}\left(\int_{B_{\lambda r}}\left|Du(x)\right|^\frac{pq}{q-2}dx\right)^\frac{q-2}{q}.
\end{equation}

Now we observe that, by Sobolev's embedding Theorem, if $u\in W^{2,p}_{\text{loc}}(\Omega)$, then $Du\in L^{q'}_{\text{loc}}(\Omega)$ for all $q'\in[p, p^*]$, where $p^*=\frac{np}{n-p}$. So, the second integral in the right hand side of \eqref{33}, converges for $\frac{pq}{q-2}\le\frac{np}{n-p}$, that is $q\ge\frac{2n}{p}$.\\
We have to distinguish between two cases.

\textbf{Case I. $\frac{pq}{q-2}=\frac{np}{n-p}$.}\\

In this case we have $q=\frac{2n}{p}$, then, by Sobolev's inequality,

\begin{equation}\label{34}
\begin{split}
\left(\int_{B_{\lambda r}}g^q(x)dx\right)^\frac{2}{q}&\left(\int_{B_{\lambda r}}\left|Du(x)\right|^\frac{pq}{q-2}dx\right)^\frac{q-2}{q}=\left(\int_{B_{\lambda r}}g^q(x)dx\right)^\frac{2}{q}\left(\int_{B_{\lambda r}}\left|Du(x)\right|^\frac{np}{n-p}\right)^\frac{n-p}{n}\\&\le c \left(\int_{B_{\lambda r}}g^q(x)dx\right)^\frac{2}{q}\int_{B_{\lambda r}}(\left|D^2u\right|^p+|Du|^p)dx.
\end{split}
\end{equation}

By the absolute continuity if the integral, there exists $R_0>0$ such that, for every $R<R_0$, we have

\begin{equation}\label{35}
c\left(\int_{B_{R}}g^q(x)dx\right)^\frac{2}{q}<\frac{1}{2}.
\end{equation}

For this choice of $R$, joining \eqref{31}, \eqref{33}, \eqref{34}, \eqref{35}, we get:

\begin{equation}\label{36}
\begin{split}
\int_{B_{r}}\left|D^2u(x)\right|^pdx&\le c\int_{B_{\lambda r}}g^2(x)dx+\frac{1}{2}\int_{B_{\lambda r}}\left|D^2u(x)\right|^pdx\\&+\left(c+\frac{c}{r^2(\lambda-1)^2}+\frac{c}{r^\frac{p}{p-1}(\lambda-1)^\frac{p}{p-1}}\right)\int_{B_{\lambda r}}\left(1+\left|Du(x)\right|^2\right)^\frac{p}{2}dx.
\end{split}
\end{equation}

\textbf{Case II. $\frac{pq}{q-2}<\frac{np}{n-p}.$}\\
In this case we have $q>\frac{2n}{p}$.\\
Since $u\in W^{2,p}_{\text{loc}}$, then $Du\in W^{1,p}_{\text{loc}}(\Omega)$ and $D^2u\in L^p_{\text{loc}}(\Omega)$. Recalling that, by Sobolev's embedding Theorem, we have $W^{1,p}_{\text{loc}}(\Omega)\hookrightarrow L^{q'}_{\text{loc}}(\Omega)$ for all $q'\in[p, p^*]$, where $p^*=\frac{np}{n-p}$, we have, for a constant $c=c(n, p)$,

\begin{equation}\label{imm1}
\Arrowvert Du\Arrowvert_{L^{q'}(B_{\lambda r})}\le c\Arrowvert Du \Arrowvert_{W^{1, p}(B_{\lambda r})}\le c\left(\Arrowvert Du \Arrowvert_{L^{p}(B_{\lambda r})}+\Arrowvert D^2u \Arrowvert_{L^{p}(B_{\lambda r})}\right).
\end{equation}

Now since, for $q'=\frac{pq}{q-2}$ we have $p<q'<p^*$, then $L^{q'}_{\text{loc}}(\Omega)\hookrightarrow L^{p}_{\text{loc}}(\Omega)$, then

\begin{equation}\label{imm2}
\Arrowvert Du \Arrowvert_{L^{p}(B_{\lambda r})}\le c \Arrowvert Du \Arrowvert_{L^{q'}(B_{\lambda r})}.
\end{equation}

Joining \eqref{imm1} and \eqref{imm2}, we get

\begin{equation}
\left(\int_{B_{\lambda r}}\left|Du(x)\right|^\frac{pq}{q-2}dx\right)^\frac{q-2}{pq}\le c\left(\int_{B_{\lambda r}}(\left|D^2u\right|^p+|Du|^p)dx\right)^\frac{1}{p}
\end{equation}

that is

\begin{equation*}
\left(\int_{B_{\lambda r}}\left|Du(x)\right|^\frac{pq}{q-2}dx\right)^\frac{q-2}{q}\le c\left(\int_{B_{\lambda r}}(\left|D^2u\right|^p+|Du|^p)dx\right).
\end{equation*}

So we obtain

\begin{equation*}
\begin{split}
&\left(\int_{B_{\lambda r}}g^q(x)dx\right)^\frac{2}{q}\left(\int_{B_{\lambda r}}\left|Du(x)\right|^\frac{pq}{q-2}dx\right)^\frac{q-2}{q}\\
&\le c\left(\int_{B_{\lambda r}}g^q(x)dx\right)^\frac{2}{q}\left(\int_{B_{\lambda r}}(\left|D^2u\right|^p+|Du|^p)dx\right)	
\end{split}
\end{equation*}

and by the absolute continuity of the integral, as in the previous case, choosing the value of $r$ opportunely, we get an estimate like \eqref{36} in this case too.

Since \eqref{36} holds for all $r$ and for all $\lambda\in(1, 2)$, setting $\rho=r$, $R_0=\lambda r$, $\gamma=\frac{p}{p-1}$ and

\begin{equation*}
h(\rho)=\int_{B_\rho}\left|D^2u(x)\right|dx,
\end{equation*}

 by Lemma \ref{iteration}, we have 

\begin{equation}\label{estimate1}
\Arrowvert D^2u\Arrowvert_{L^{p}(B_{r})}\le c(\alpha, \beta, p, n) \left(\Arrowvert Du\Arrowvert_{L^{p}(B_{\lambda r})}+\Arrowvert g\Arrowvert_{L^{2}(B_{\lambda r})}\right).
\end{equation}

Since $q\ge\frac{2n}{p}>2$, we have $L^q_{\text{loc}}(\Omega)\hookrightarrow L^2_{\text{loc}}(\Omega)$, and by \eqref{estimate1} we get

\begin{equation}\label{estimate2}
\Arrowvert D^2u\Arrowvert_{L^{p}(B_{r})}\le C(\alpha, \beta, p, n) \left(\Arrowvert Du\Arrowvert_{L^{p}(B_{\lambda r})}+\Arrowvert g\Arrowvert_{L^{q}(B_{\lambda r})}\right),
\end{equation}

that is \eqref{mainestimate}.
\end{proof}

	{Andrea Gentile
	\\
	Dipartimento di Matematica e Applicazioni ``R. Caccioppoli''
\\
Universit\`a degli Studi di Napoli ``Federico II''\\
Via Cintia, 80126, Napoli (Italy)
\\andr.gentile@studenti.unina.it}
\end{document}